\begin{document}

\newcommand{\GL}{\operatorname{GL}}

\newcommand{\CC}{\mathbb C}
\newcommand{\NN}{\mathbb N}
\newcommand{\QQ}{\mathbb Q}
\newcommand{\RR}{\mathbb R}
\newcommand{\ZZ}{\mathbb Z}

\newcommand{\Qp}{{ {\mathbb Q}_p} }
\newcommand{\Zp}{{{\mathbb Z}_p}}

\newcommand{\cF}{{\mathcal F}}
\newcommand{\cL}{{\mathcal L}}

\newcommand{\bz}{{\mathbf 0}}

\newcommand{\sgn}{\operatorname{sgn}}

\theoremstyle{theorem}
\newtheorem{thm}{Theorem}[section]
\newtheorem{lem}[thm]{Lemma}
\newtheorem{prop}[thm]{Proposition}
\newtheorem{conj}[thm]{Conjecture}
\newtheorem{fact}[thm]{Fact}
\newtheorem{cor}[thm]{Corollary}

\theoremstyle{remark}
\newtheorem{Rk}[thm]{Remark}

\theoremstyle{definition}
\newtheorem{Def}[thm]{Definition}
\newtheorem{convention}[thm]{Convention}
\newtheorem{notation}[thm]{Notation}

\title{A Euclidean Skolem-Mahler-Lech-Chabauty method}
\author{Thomas Scanlon}
\thanks{Partially supported by NSF grants FRG DMS-0854998 and DMS-1001550.}
\email{scanlon@math.berkeley.edu}
\address{University of California, Berkeley \\
Department of Mathematics \\
Evans Hall \\
Berkeley, CA 94720-3840 \\
USA}

\begin{abstract}
Using the theory of o-minimality we show that the $p$-adic method of Skolem-Mahler-Lech-Chabauty may be adapted to prove instances of the dynamical Mordell-Lang conjecture for some real analytic dynamical systems.  For example, we show that if $f_1, \ldots, f_n$ is a finite sequence of real analytic functions $f_i:(-1,1) \to (-1,1)$ for which $f_i(0) = 0$ and $|f_i'(0)| \leq 1$ (possibly zero), $a = (a_1,\ldots,a_n)$ is an $n$-tuple of real numbers close enough to the origin and $H(x_1,\ldots,x_n)$ is a real analytic function of $n$ variables, then the set $\{ m \in \NN : H (f_1^{\circ m} (a_1), \ldots, f_n^{\circ m}(a_n)) = 0 \}$ is either all of $\NN$, all of the odd numbers, all of the even numbers, or is finite.
\end{abstract}

\maketitle

\section{Introduction}

Consider the following form of the dynamical Mordell-Lang conjecture enunciated in~\cite{GT} generalizing Zhang's version from~\cite{Zh}.

\begin{conj}
\label{DMLconj}
Let $K$ be a field of characteristic zero, $X$ an algebraic variety over $K$, $f:X \to X$ a regular self-map of $X$ also defined over $K$, $a \in X(K)$ a $K$-rational point and $Y \subseteq X$ a closed subvariety.  Then the set $\{ n \in \NN : f^{\circ n} (a) \in Y(K) \}$ is a finite union of arithmetic progressions (where we allow the modulus of an arithmetic progression to be zero so that a singleton is an arithmetic progression).
\end{conj}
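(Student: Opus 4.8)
The plan is to mount the classical Skolem--Mahler--Lech--Chabauty $p$-adic argument in full generality, following the strategy that succeeds for \'etale endomorphisms. First I would reduce to an arithmetic situation: since $X$, $f$, $a$ and $Y$ are defined over $K$, they are already defined over a finitely generated subfield $K_0 \subseteq K$, and one may spread the data out over a finitely generated $\ZZ$-subalgebra $R \subseteq K_0$ with $\operatorname{Frac}(R) = K_0$ so that $X$, $f$ and $Y$ extend to schemes over $\operatorname{Spec} R$. For all but finitely many primes $p$ there is then an embedding $K_0 \hookrightarrow \CC_p$ carrying $R$ into the valuation ring and having good reduction: $X$ reduces to a smooth variety $\bar X$ over a finite field $\mathbb{F}_q$, $f$ to a morphism $\bar f$, and $a$ to a point $\bar a \in \bar X(\mathbb{F}_q)$.

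The dynamical input comes next. Because $\bar X(\mathbb{F}_q)$ is finite, the forward orbit of $\bar a$ under $\bar f$ is eventually periodic, say with tail length $t$ and period $N$. Discarding the finitely many initial terms and partitioning $\NN$ into residue classes modulo $N$, it suffices to show for each residue $r$ that $\{ m \in \NN : f^{\circ(Nm+r)}(a) \in Y \}$ is finite or all of $\NN$. Fix such an $r$; then every $f^{\circ(Nm+r)}(a)$ lies in the same residue disk $U \subseteq X(\CC_p)$ as $c := f^{\circ r}(a)$, whose reduction is a fixed point of $\bar f^{\circ N}$. On $U$ local coordinates identify $g := f^{\circ N}$ with a power series fixing $\bar c$, and the crucial step is to interpolate the discrete iteration $m \mapsto g^{\circ m}(c)$ by a single $p$-adic analytic function $\Phi : \Zp \to X(\CC_p)$ with $\Phi(m) = g^{\circ m}(c)$ for all $m \in \NN$, built by solving the functional equation $\Phi(m+1) = g(\Phi(m))$ and checking convergence on $\Zp$.

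With the interpolation in hand the conclusion is immediate. Writing $Y$ as the common zero locus of regular functions $H_1, \ldots, H_k$, each composite $H_j \circ \Phi : \Zp \to \CC_p$ is a convergent $p$-adic power series, and the return set in the $r$-th progression is $\{ m \in \Zp : H_1(\Phi(m)) = \cdots = H_k(\Phi(m)) = 0 \} \cap \NN$. By Strassmann's theorem a nonzero convergent power series on $\Zp$ has only finitely many zeros, so each $H_j \circ \Phi$ is either identically zero or has finitely many zeros; hence the common zero set is all of $\Zp$ or finite. Reassembling over the finitely many residues modulo $N$ and restoring the finite tail expresses the full return set as a finite union of arithmetic progressions, as required.

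The hard part is precisely the interpolation step, and it is here that the general conjecture remains open. Constructing $\Phi$ on the residue disk genuinely requires that the orbit avoid the critical locus of $f$ --- that $f$ be \'etale along the orbit --- so that the local iteration is governed by an invertible linear part; when $\bar g$ has vanishing or non-unit Jacobian at $\bar c$ (for instance whenever the orbit passes through a ramification point, as in the multiplier condition $|f_i'(0)| \le 1$ of the abstract), the naive series for $\Phi$ need not converge on all of $\Zp$, and no choice of prime $p$ repairs this for arbitrary $f$. Overcoming this degeneracy --- replacing the $p$-adic analytic interpolation by a real analytic one controlled through o-minimality --- is exactly the device this paper introduces to reach the critical, non-\'etale cases lying beyond the classical Chabauty method.
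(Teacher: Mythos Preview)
The statement you are attempting to prove is labeled a \emph{Conjecture} in the paper, and the paper does not offer a proof of it in full generality.  The paper sketches in the Introduction exactly the $p$-adic argument you describe --- spread out over a finitely generated ring, embed into $\Qp$, find a $p$-adic analytic interpolation $F:\Zp\to X(\Qp)$ of $n\mapsto f^{\circ n}(a)$, and conclude via the zero structure of $p$-adic analytic functions --- and, as you correctly diagnose in your final paragraph, the interpolation step is the obstruction that keeps the conjecture open.  So your writeup is not a proof, and you already know it is not: your last paragraph concedes that constructing $\Phi$ on $\Zp$ genuinely requires an \'etaleness/invertible-linear-part hypothesis that is unavailable in general.

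There is thus no ``paper's own proof'' to compare against: the paper's contribution is not to prove Conjecture~\ref{DMLconj} but to establish special cases (Cartesian products of univariate real analytic maps near fixed points, more generally products of strongly projectively linearizable and strongly monomializable maps) by replacing the $p$-adic interpolation with a real interpolation $F:[0,\infty)\to\RR^n$ that is \emph{definable in the o-minimal structure $\RR_{an,\exp}$}.  O-minimality then plays the role of Strassmann's theorem: the preimage of $Y$ under $F$ is a definable subset of $\RR$, hence a finite union of points and intervals, so its intersection with $\NN$ is a finite union of points and arithmetic progressions.  The gain over the $p$-adic method is exactly at superattracting fixed points ($f'(0)=0$), where B\"ottcher coordinates monomialize the map and $\RR_{an,\exp}$-definable interpolation is available even though no $p$-adic analytic interpolation exists.

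One small imprecision: the multiplier condition $|f_i'(0)|\le 1$ in the abstract is not itself the obstruction to the $p$-adic method; the relevant bad case is $f_i'(0)=0$ (and, differently, parabolic $|f_i'(0)|=1$).  Attracting multipliers with $0<|f'(0)|<1$ are handled by the classical argument after a suitable choice of $p$.
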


By adapting Skolem's $p$-adic method~\cite{Sk} (attributed to and extended and developed by, at least, Lech~\cite{Le}, Mahler~\cite{Mahler}, and Chabauty~\cite{Chab} as well) for analyzing algebraic relations on cyclic subgroups of algebraic groups to more general algebraic dynamical systems, one might hope to prove Conjecture~\ref{DMLconj}, and, indeed, in~\cite{BGKT} exactly this strategy is employed.  Since all of the data are defined over a finitely generated subfield of $K$, we may assume that $K$ itself is finitely generated.   Under suitable hypotheses, we can find some embedding $K \hookrightarrow \Qp$ so that there is a $p$-adic analytic function $F:\Zp \to X(\Qp)$ interpolating the function $n \mapsto f^{\circ n}(a)$ on $\NN$.  The set $\{ x \in \Zp : F(x) \in Y(\Qp) \}$ is then defined by the vanishing of a $p$-adic analytic function and as such is a finite union of points and cosets of $p^m \Zp$ for some $m \geq 0$.  Hence, $\{ n \in \NN : f^{\circ n}(a) \in Y(K) \}$ is also a finite union of points and arithmetic progressions with modulus $p^m$.

On the face of it, it would seem that the argument sketched in the above paragraph would yield no information if $\Qp$ were replaced by $\RR$.  Indeed, if $S \subseteq \NN$ were any set of natural numbers, then we could find a real analytic function which vanishes exactly on $S$.  However, the point of this note is that in many cases of interest, including some which are not amenable to the $p$-adic method due to the presence of superattracting points, one may find a function $F:\RR_+ \to X(\RR)$ interpolating $n \mapsto f^{\circ n} (a)$ which is definable in an \emph{o-minimal} expansion of the field of real numbers.  It then follows directly from o-minimality that if such an orbit has infinite intersection with an algebraic variety, then all but finitely many of the point from that orbit lie in the subvariety.

We recognize that the notions of definability and o-minimality may not be familiar to the reader approaching this note from outside of logic.  The recent work of Pila on the Andr\'{e}-Oort conjecture which uses an analysis of definability in o-minimal structures in essential ways may have introduced these ideas to number theorists, and in Section~\ref{cr} we speculate about how the estimates of~\cite{PW} may give useful information about higher rank dynamical Mordell-Lang problems.  In any case, in Section~\ref{omin} we recall the basic definitions and results about the specific o-minimal structures, of the real field considered together with the real exponential function and restricted analytic functions, that we shall require.

\section{O-minimality}
\label{omin}

The basic theory of o-minimality is exposed in~\cite{vdD} and the proofs that the structure we shall use, namely the field of real numbers endowed with the exponential function and bounded analytic functions, is o-minimal can be found in~\cite{DM,DMM}.  We recall here some fundamental definitions and results.

\begin{Def}
A structure $(M,<,\ldots)$ which is totally ordered by the relation $<$ is \emph{o-minimal} if every definable (with parameters) subset of $M$ is a finite union of singletons and intervals of the form  $(-\infty,a) := \{ x \in M : x < a \}$, $(a,b) := \{ x \in M : a < x < b \}$ or $(b,\infty) := \{ x \in M : b < x \}$ for some $a, b \in M$.
\end{Def}

Here \emph{structure} is understood in the sense of first-order logic.  That is, the set $M$ is endowed not only with an ordering interpreting $<$ but it also interprets the extra function, relation and constant symbols elided by the ellipses.  Likewise, to say that a set $X \subseteq M$ is \emph{definable (with parameters)} is to say that there is a first-order formula $\phi(x)$ in one free variable $x$ in the language of $M$ possibly augmented by constant symbols naming elements of $M$ so that $$X =\{ a \in M :  \text{ the formula } \phi \text{ is true in } M \text{ when }a\text{ is substituted for }x \}$$

While the theory as a whole is enriched through the study of general o-minimal structures especially through a kind of nonstandard analysis, our direct application of o-minimality concerns only the real numbers and the extra structure is specified by naming some functions.  Thus, rather than rehashing the theory of definability in general structures, we restrict to the case of expansions of the ordered field of real numbers by families of functions.

\begin{Def}
Suppose that for each $n \in \ZZ_+$ that $\cF_n$ is a set of real valued functions $f:\RR^n \to \RR$ of $n$ real variables.  By $\cL_\cF$ we mean the first-order language having a binary relation symbol $\leq$, binary function symbols $+$ and $\cdot$, constant symbols $r$ for each $r \in \RR$, and $n$-ary function symbols $f$ for each $f \in \cF_n$.  By $\RR_\cF$ we mean the $\cL_\cF$-structure having universe $\RR$ on which the nonlogical symbols are interpreted eponymously.
\end{Def}

In the structure $\RR_\cF$, sets of the form $\{ (a_1,\ldots,a_n) \in \RR^n : f(a_1,\ldots,a_n) \leq g(a_1,\ldots,a_n) \}$ are definable where $f, g \in \cF_n$ or more generally where $f$ and $g$ are obtained from the projection functions, constant functions, functions in $\cF$ and addition and multiplication via appropriate compositions.    We obtain the class of quantifier-free definable sets by closing off under finite Boolean operations.  In general, as first-order logic permits the application of existential quantifiers or what is is the same thing at the level of the definable sets themselves, images under coordinate projections, there will be definable sets which cannot be expressed in the simple form of finite Boolean combinations of sets defined by inequalities between the basic functions.   It is a celebrated theorem of Tarski~\cite{Tar} that when $\cF = \varnothing$, so that we are consider the set of real numbers simply as an ordered field, every definable set in any number of variables is quantifier-free definable.   In this case, the functions $f$ and $g$ are simply polynomials over the real numbers and the definable sets are the \emph{semi-algebraic} sets, sets of $n$-tuples of real numbers defined by polynomial inequalities.  Specializing to $n = 1$, using the fact that nonconstant polynomials have only finitely many zeros and can change sign only at their zeros, we see that $\overline{\RR} := \RR_{\varnothing} = (\RR,\leq,+,\cdot)$ is o-minimal.  Wilkie showed~\cite{Wilkie} using results of Khovanski on few-nomials~\cite{Kh} that when $\cF = \{ \exp \}$, then every definable set may be expressed as a projection of a basic set and that $\RR_{\exp} := \RR_{ \{ \exp \} } = (\RR,\leq,+,\cdot,\exp)$ is o-minimal.

It should be clear that we cannot take $\cF$ to consist of all real analytic functions and hope for $\RR_\cF$ to be o-minimal, as, for instance, the sine function is globally analytic but the infinite discrete set $\{ x \in \RR : 0 = \sin(x) \}$ is definable in $\RR_{\{ \sin \}}$.    However, if we consider only \emph{restricted} analytic functions, then the resulting structure is o-minimal.  That is, we let $\cF_n$ consist of all function $f:\RR^n \to \RR$ for which \begin{itemize}
\item there is some neighborhood $U \supseteq [-1,1]^n$ and a real analytic function $g:U \to \RR$ for which $g \upharpoonright [-1,1]^n = f \upharpoonright [-1,1]^n$ and
\item $f(x_1,\ldots,x_n) = 0$ if $|x_i| > 1$ for any $i \leq n$.
\end{itemize}

\begin{Rk}
There is more than one reasonable way to formalize the idea of including all restricted analytic functions.  For example, one might want to allow for restrictions of analytic functions to other polyhedra or the convention that the function is extended by zero outside of the box on which it is analytic might be replaced by the convention that the function is simply undefined in that region.  Ultimately, these variants lead to the same class of definable sets.
\end{Rk}

In parallel with their work on $p$-adic analytic function, Denef and van den Dries~\cite{DD} proved a quantifier elimination theorem for restricted real analytic functions from which one deduces that $\RR_{an} := \RR_\cF$ is o-minimal.

Finally, for us but not in the search for interesting and useful o-minimal structures, the structure obtained by enriching $\RR_{an}$ with the global exponential function, $\RR_{an,\exp}$, is itself o-minimal~\cite{DM,DMM}.

With our application to the problem of describing intersections of dynamical orbits with algebraic varieties, nothing more than the definition of o-minimality and the fact that $\RR_{an,\exp}$ is o-minimal will be used.  However, deeper consequences of o-minimality may be relevant to the questions about orbits with respect to higher rank monoids of operators which we shall discuss in Section~\ref{cr}.

\section{Main theorem}

In this section we observe that in many cases of interest with weaker hypotheses than what are required for linearization some dynamical orbits are o-minimally uniformized from which we deduce these instances of the dynamical Mordell-Lang conjecture.

Let us begin with a lemma on exponentiation in linear groups.    Before we do so, we set some notation.

\begin{notation}
For $n \in \ZZ_+$ a positive integer we write $\GL_n^+(\RR)$ for the set of invertible $n \times n$ real matrices all of whose eigenvalues are real and positive. We write $I_n \in \GL_n(\RR)$ for the identity.  We let $J_n \in \GL_n(\RR)$ be the matrix with $1$s along the upper off diagonal and zeroes everywhere else.  That is, $$(J_n)_{i,j} = \begin{cases} 1 \text{ if } j = i+1 \\ 0 \text{ otherwise } \end{cases}$$
For $n_1, \ldots, n_\ell$ a sequence of positive integers, there is a natural inclusion of groups $\GL_{n_1}(\RR) \times \cdots \times \GL_{n_\ell}(\RR) \hookrightarrow \GL_{n_1 + \cdots + n_\ell}(\RR)$.   We denote the image of $(g_1,\ldots,g_\ell) \in \GL_{n_1}(\RR) \times \cdots \times \GL_{n_\ell}(\RR)$ under this map by $g_1 \oplus \cdots \oplus g_\ell$.

By a \emph{partition} of a positive integer $n$ we mean a finite non-decreasing sequence $\pi = (\pi_1,\ldots,\pi_\ell)$ of positive integers for which $n = \sum_{j=1}^\ell \pi_i$.
\end{notation}

\begin{lem}
\label{expgln}
For each positive integer $n \in \ZZ_+$ there is a unique function $E_n:\RR \times \GL_n^+(\RR) \to \GL_n^+(\RR)$ definable in $\RR_{\exp}$ which satisfies the condition that for $g \in \GL_n^+(\RR)$ and $x \in \RR$ we have $E_n(1,g) = g$ and $E_n(x+1,g) = g \cdot E_n(x,g)$.
\end{lem}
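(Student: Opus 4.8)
The plan is to construct $E_n$ explicitly via the matrix exponential and logarithm, using the Jordan decomposition, and then verify both the functional equation and definability in $\RR_{\exp}$. First I would reduce to the case of a single Jordan block. Any $g \in \GL_n^+(\RR)$ can be conjugated over $\RR$ into a block form $g = h \cdot (B_1 \oplus \cdots \oplus B_k) \cdot h^{-1}$ where each $B_j$ is $\lambda_j(I_{m_j} + N_{m_j})$ with $\lambda_j > 0$ real and $N_{m_j}$ nilpotent (a scalar multiple of $J_{m_j}$). On a single block, set $E(x, B_j) := \lambda_j^{x}\exp\bigl(x \log(I_{m_j} + N_{m_j})\bigr)$, where $\log(I + N) = \sum_{i=1}^{m_j - 1}\frac{(-1)^{i+1}}{i}N^i$ is a finite (polynomial) sum since $N$ is nilpotent, and the outer $\exp$ is likewise a terminating power series in $x$ with polynomial-in-$N$ coefficients, so the whole expression is polynomial in $x$ times the real power $\lambda_j^x = \exp(x \log \lambda_j)$. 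One then defines $E_n(x,g) := h \cdot \bigl(E(x,B_1) \oplus \cdots \oplus E(x,B_k)\bigr)\cdot h^{-1}$.

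The key steps, in order, are: (1) check this is well-defined independent of the choice of conjugating $h$ and block ordering — this follows because $E(x,B_j)$ is a limit of polynomials in $B_j$ (indeed literally a polynomial in $B_j$ with coefficients that are $\RR_{\exp}$-definable functions of $x$), so it commutes with everything commuting with $g$ and transforms correctly under conjugation; (2) verify $E_n(1,g) = g$, which reduces to $\lambda_j \exp(\log(I + N)) = \lambda_j(I+N)$, a standard finite identity; (3) verify the cocycle relation $E_n(x+y,g) = E_n(x,g)E_n(y,g)$ on each block, which holds because $\lambda_j^{x+y} = \lambda_j^x\lambda_j^y$ and $\exp((x+y)L) = \exp(xL)\exp(yL)$ for the fixed matrix $L = \log(I+N)$; specializing $y = 1$ gives the stated recursion; (4) establish definability in $\RR_{\exp}$: the Jordan form data $(\lambda_j, m_j, h)$ depends on $g$ in a way that is definable over the real field (eigenvalues are roots of the characteristic polynomial, and rational canonical / Jordan forms are semi-algebraically parametrized), the power $\lambda^x = \exp(x\log\lambda)$ uses only $\exp$ (and $\log$, which is $\exp$-definable on $\RR_{>0}$), and all remaining operations are polynomial; (5) prove uniqueness — if $E_n'$ is another such definable function, then for fixed $g$ the map $x \mapsto E_n'(x,g)E_n(x,g)^{-1}$ is $\RR_{\exp}$-definable, equals $I_n$ on $1 + \ZZ$ (by induction from the recursion $E_n(x+1,g) = g E_n(x,g)$ applied both ways), hence by o-minimality of $\RR_{\exp}$ equals $I_n$ on a cofinite set, and then analyticity (or a further o-minimality/monotonicity argument along the recursion) forces equality everywhere.

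I expect the main obstacle to be step (4), the definability of the Jordan decomposition as a function of $g$ uniformly in $g \in \GL_n^+(\RR)$: while the existence of a Jordan form is classical, one must produce the conjugating matrix $h$ and the block structure by a single $\RR_{\exp}$-formula, which requires care because the multiplicities $m_j$ and the coincidence pattern among the $\lambda_j$ vary with $g$. The clean way around this is to avoid choosing $h$ at all and instead define $E_n(x,g)$ directly by a convergent construction intrinsic to $g$: write $g = su$ with $s$ the semisimple part and $u = I_n + (s^{-1}g - I_n)$ the unipotent part (both polynomials in $g$, hence definable), set the unipotent contribution to be $\exp(x\log u)$ with $\log u$ the terminating series in the nilpotent $u - I_n$, and handle the semisimple part $s$ by noting that on each generalized eigenspace $s$ acts as a positive scalar, so $s^x$ makes sense; packaging this as $s^x = \exp(x \cdot \ell)$ where $\ell$ is the (definable) matrix logarithm of $s$ — well-defined since all eigenvalues of $s$ are positive reals — keeps everything inside $\RR_{\exp}$. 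With $E_n(x,g) := s^x \exp(x\log u)$, steps (1)–(3) become immediate from the commuting decomposition $g = su$, and step (5) proceeds as above; the only real work is confirming that $s \mapsto \log s$ is $\RR_{\exp}$-definable on the set of semisimple matrices with positive real spectrum, which reduces to the definability of $\lambda \mapsto \log\lambda$ on $\RR_{>0}$ and functional calculus expressed through Lagrange interpolation in $s$.
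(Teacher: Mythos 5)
Your construction is sound and, in its first half, is essentially the paper's: the paper also defines $E_n(x,g)$ through the Jordan form with positive real eigenvalues, writing each block's $x$-th power as $\exp(x\ln\lambda)\sum_{j}\binom{x}{j}\lambda^{-j}J^j$ (the same terminating expression you get from $\lambda^x\exp(x\log(I+N))$), and it proves uniqueness exactly as you sketch at the end: the agreement set is definable, contains the integers, hence by o-minimality contains a tail $(B,\infty)$, and the recursion $E_n(x,g)=g^{-1}E_n(x+1,g)$ translates this to all of $\RR$. Where you diverge is the definability step. The obstacle you identify --- producing the conjugating matrix $h$ and the block data definably in $g$ --- is one the paper simply sidesteps: it defines the \emph{graph} of $E_n$ by an existential formula (a disjunction over partitions of $n$, quantifying over $h$ and the eigenvalues), and then proves single-valuedness from uniqueness of the normalized Jordan form together with the observation that any $k$ centralizing the Jordan matrix centralizes its integer powers and hence, by o-minimality again, its real powers; no definable choice of $h$ is ever needed. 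Your alternative via the multiplicative Jordan--Chevalley decomposition $g=su$ with $\log u$ a terminating series and the semisimple part handled by functional calculus is a legitimate variant: it makes well-definedness automatic (primary matrix functions are polynomials in the matrix, so everything commutes correctly) at the cost of having to check that $g\mapsto(s,u)$, $s\mapsto\log s$ and, importantly, $(x,s)\mapsto s^x$ are definable --- note that $\exp(x\ell)$ cannot be taken literally as a term of $\RR_{\exp}$ (the matrix exponential is an infinite series), so $s^x$ should itself be defined by Lagrange--Hermite interpolation at the eigenvalues or by an existential diagonalization formula, with a finite case split on the number of distinct eigenvalues. Two small imprecisions in your uniqueness step: o-minimality gives agreement on an unbounded interval, not on a cofinite set, and ``analyticity'' is not available for a function that is merely definable; but the recursion-translation argument you also offer is the correct (and the paper's) way to finish.
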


\begin{proof}
Let us prove uniqueness.   Suppose that $F_n$ is another function definable in $\RR_{\exp}$ which satisfies the stated difference equation.  Let $g \in \GL_n^+(\RR)$.  Since the set $S := \{ x \in \RR : E_n(x,g) = F_n(x,g) \}$ is definable in the o-minimal structure $\RR_{\exp}$, there must be some $B \in \RR_+$ so that either $(B,\infty) \subseteq S$ or $(B,\infty) \cap S = \varnothing$.   As the difference equation implies that $E_n(m,g) = g^m = F_n(m,g)$ for $m \in \ZZ$, we see that $(B,\infty) \subseteq S$.  Applying the difference equation again we see that $S = \RR$, as claimed.

Let us now define $E_n$.

\begin{eqnarray*}
E_n(x,g) = y & \Longleftrightarrow & \bigvee_{(\pi_1, \ldots, \pi_\ell) \text{ a partition of }n}
(\exists h \in \GL_n(\RR))(\exists \lambda_1, \ldots,\lambda_\ell \in \RR_+)  \\
&& \bigwedge_{\pi_i = \pi_{i+1}} \lambda_i \geq \lambda_{i+1} \& \\
& & h g h^{-1} = (\lambda_1 I_{\pi_1} + J_{\pi_1}) \oplus \cdots \oplus (\lambda_\ell I_{\pi_\ell} + J_{\pi_\ell})  \& \\
&& h y h^{-1} = \exp(x \ln(\lambda_1)) \sum_{j=0}^{\pi_1 - 1} \binom{x}{j} \lambda_1^{-j} J_{\pi_1}^j \oplus \cdots \\
&& \oplus \exp(x \ln(\lambda_\ell)) \sum_{j=0}^{\pi_\ell - 1} \binom{x}{j} \lambda_\ell^{-j} J_{\pi_1}^j
\end{eqnarray*}

While we have employed standard mathematical abbreviations in the formula defining $E$ (for example, to speak of $\ln(\lambda_i)$ we should really quantify over a new variable $z_i$ and include the defining condition $\exp(z_i) = \lambda_i$), it should be clear that the expression on the right may be given by a formula in the language of $\RR_{\exp}$ and that it defines a \emph{relation} between $(x,g)$ and $y$, the purported value of $E_n(x,g)$.  That $y$ is a function of $(x,g)$ is a consequence of two facts.  First, the Jordan form of a matrix is unique provided that we normalize (as we have) so that size of the blocks is non-increasing and then within the blocks of a given size the eigenvalues are non-increasing.  Secondly, if we were to choose $h'$ to be another matrix which conjugates $g$ to its Jordan form, then $h' = k h$ where $k$ centralizes the Jordan matrix.  It then follows that $k$ centralizes any integer power of the Jordan matrix and consequently, by o-minimality again, any real power.
\end{proof}

Most everything we discuss will make sense only in some neighborhood of the function under consideration.  However, we do not wish to speak about germs as we will actually apply these functions.  Instead we shall employ expression like `` $\Phi$ is a self-map near the origin'' or that some compositional identity holds near the origin to mean, in the former case, that there is some open set $U \subseteq \RR^n$ with $\bz = (0,\ldots,0) \in U$ for which $\Phi:U \to U$ and $\Phi(\bz) = \bz$, while in the latter case we mean that the identity in question holds on a neighborhood of the origin.  When the ambient dimension $n$ is relevant, it will be mentioned explicitly.  Likewise, we may speak of some point $a$ being ``close enough to the origin'' by which we mean that $a$ belongs to a neighborhood of the origin with respect to which the mentioned self-maps restrict to self-maps and the pertinent compositional identities hold.

\begin{Def}
We say that a real analytic self-map $\Phi$ near the origin is \emph{projectively linearizable} if there is a real analytic function $\alpha$ which fixes the origin and is invertible near the origin for which $\alpha \circ \Phi \circ \alpha^{-1}$ is given by a fractional linear transformation near the origin.  We say that $\Phi$ is \emph{strongly projectively linearizable} if moreover every eigenvalue of some matrix representing its projective linearization is real and positive.

We say that $\Phi$ is \emph{monomializable} if $d \Phi_\bz \equiv \bz$ and there are a real analytic $\alpha$ as above, a matrix $M \in \GL_n(\QQ)$ all of whose entries are positive integers but not having roots of unity amongst its eigenvalues, and a tuple $\lambda = (\lambda_1,\ldots,\lambda_n) \in (\RR^\times)^\ell$ so that near the origin

$$\alpha \circ \Phi \circ \alpha^{-1} (x) = \lambda \cdot x^M = (\lambda_1 x_1^{M_{1,1}} x_2^{M_{1,2}} \cdots x_n^{M_{1,n}}, \ldots, \lambda_n x_1^{M_{n,1}} x_2^{M_{n,2}} \cdots x_n^{M_{n,n}})$$

We say that $\Phi$ is \emph{strongly monomializable} if moreover every eigenvalue of $M$ is real and positive.

\end{Def}

\begin{Rk}
The restriction that powers of $M$ not have a nontrivial fixed vector might seem unnatural as, of course, for instance, the polynomial $x_1$ is certainly a monomial, but since linear maps and higher degree monomials have different behaviors, we have imposed this condition.
\end{Rk}

\begin{Rk}
One might wish to identify linearizability  in which $\Phi$ is analytic conjugate to its differential at the origin as a separate case from projective linearizability.  However, for purposes of our arguments there is nothing to be gained from such a separation.
\end{Rk}

\begin{lem}
If $\Phi$ is monomializable, then one may choose the vector $\lambda = (\lambda_1, \ldots, \lambda_n)$ to consist entirely of $\pm 1$
\end{lem}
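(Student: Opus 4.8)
The plan is to absorb a diagonal linear rescaling into the linearizing coordinate change. Assume $\alpha$, $M$ and $\lambda = (\lambda_1,\ldots,\lambda_n) \in (\RR^\times)^n$ are as in the definition, so that near the origin $\alpha \circ \Phi \circ \alpha^{-1}(x) = \lambda \cdot x^M$. For a tuple $\mu = (\mu_1,\ldots,\mu_n)$ of positive real numbers let $D_\mu$ be the diagonal linear automorphism $x \mapsto (\mu_1 x_1,\ldots,\mu_n x_n)$ and put $\beta := D_\mu \circ \alpha$. Then $\beta$ is again real analytic, fixes the origin, and is invertible near the origin (with $d\beta_\bz = D_\mu \circ d\alpha_\bz$ invertible), and the exponent matrix is unchanged, so it remains only to compute the new multiplier vector. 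Since conjugation by $D_\mu$ carries a monomial map to a monomial map with the same exponents,
\[
\beta \circ \Phi \circ \beta^{-1}(x) = D_\mu\bigl(\lambda \cdot (D_\mu^{-1} x)^M\bigr) = \lambda' \cdot x^M, \qquad \lambda'_i = \mu_i\, \lambda_i \prod_{j=1}^n \mu_j^{-M_{i,j}}.
\]

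It now suffices to choose $\mu \in (\RR_+)^n$ with $|\lambda'_i| = 1$ for every $i$, since then each $\lambda'_i$ equals $+1$ or $-1$. Taking logarithms of absolute values and writing $t_i := \log \mu_i$ and $c_i := \log|\lambda_i|$, the conditions $|\lambda'_i| = 1$ for $1 \le i \le n$ are exactly the linear system $(M - I_n)\,t = c$ in the unknown vector $t = (t_1,\ldots,t_n)$, where $c = (c_1,\ldots,c_n)$. By hypothesis no eigenvalue of $M$ is a root of unity; in particular $1$ is not an eigenvalue of $M$, so $M - I_n$ is invertible. Setting $t := (M - I_n)^{-1} c$ and $\mu_i := e^{t_i} > 0$ then makes all the $\lambda'_i$ equal to $\pm 1$, and $\beta$ witnesses that $\Phi$ is monomializable with this multiplier vector and the same $M$.

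The only place the hypotheses are used is the invertibility of $M - I_n$, which is precisely what the assumption that $M$ has no root of unity among its eigenvalues provides; everything else is the routine bookkeeping of how $\lambda \cdot x^M$ transforms under a diagonal conjugacy, together with the observation that post-composing $\alpha$ with a global analytic automorphism fixing $\bz$ preserves all the remaining conditions in the definition of monomializable. I do not anticipate any genuine obstacle here.
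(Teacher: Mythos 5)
Your proof is correct and takes essentially the same approach as the paper: conjugate by a positive diagonal scaling and use that $M - I_n$ is invertible because $1$ is not an eigenvalue of $M$. The paper solves the resulting condition multiplicatively, finding $\mu \in (\RR_+)^n$ with $\mu^{M-1} = (1/|\lambda_1|,\ldots,1/|\lambda_n|)$ since $x \mapsto x^{M-1}$ is a bijection of $(\RR_+)^n$, which is exactly your logarithmic linear system $(M-I_n)t = c$ in multiplicative form.
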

\label{pm1}
\begin{proof}
By hypothesis, we have conjugated $\Phi$ to $\lambda \cdot x^M$ for some $\lambda \in (\RR^\times)^n$ and $M \in \GL_n(\RR) \cap \operatorname{Mat}_{n \times n}(\NN)$.  Let us consider the result of conjugating by the action of a linear map $x \mapsto \mu \cdot x$ where $\mu = (\mu_1,\ldots,\mu_n) \in (\RR^\times)^n$.  We compute that $$\mu^{-1} \cdot (\lambda \cdot (\mu \cdot x)^M = \mu^{M - 1} \cdot \lambda \cdot x^M$$  Since $1$ is not an eigenvalue of $M$, the matrix $M - 1$ has full rank.  Hence, the map $x \mapsto x^{M-1}$ is a bijective self-map of $(\RR_+)^n$ and in particular we can find $\mu$ so that $\mu^{M-1} = (1/|\lambda_1|, \ldots, 1/|\lambda_n|)$.
\end{proof}

\begin{prop}
\label{monoexp}
Given a dimension $m$ there is another number $B = B(m)$ so that for any strongly monomializable $\Phi$ in $m$ variables and any point $a$ close enough to the origin and in its $\Phi$-attracting basin there is a function $F:[0,\infty) \to \RR^m$ definable in $\RR_{an,\exp}$ satisfying $E(0) = a$ and the functional equation $F(x+1) = \Phi^{\circ B}(F(x))$.
\end{prop}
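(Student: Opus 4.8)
The plan is to conjugate $\Phi$ into the genuinely monomial form $\Psi(x)=\lambda\cdot x^{M}$ — with $\lambda\in\{\pm 1\}^{m}$, as the previous lemma permits — and then to build the interpolating function for $\Psi^{\circ B}$ by treating the \emph{moduli} and the \emph{signs} of the coordinates separately, before pulling everything back through the conjugacy. The moduli will be uniformized by means of the function $E_{m}$ of Lemma~\ref{expgln}, whereas the signs are governed by an affine self-map of $(\ZZ/2\ZZ)^{m}$; since there are only boundedly many such maps, their periods are bounded in terms of $m$ alone, and that is exactly what will fix the uniform $B=B(m)$.

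First I would reduce to the monomial case. Writing $\alpha\circ\Phi\circ\alpha^{-1}=\Psi$ on a neighborhood of the origin, the maps $\alpha$ and $\alpha^{-1}$, being real analytic near $\bz$, have restrictions to suitable closed boxes which become restricted analytic functions after rescaling, hence are definable in $\RR_{an}$. Note that $\Psi$ extends to a polynomial self-map of all of $\RR^{m}$ (the entries of $M$ being positive integers) and that $\Psi^{\circ B}(x)=\mu\cdot x^{M^{B}}$, where $M^{B}$ is again a strictly positive integer matrix whose eigenvalues are real, positive and $\neq 1$; in particular $M^{B}\in\GL_{m}^{+}(\RR)$. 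If $a$ is close enough to $\bz$ and in the $\Phi$-attracting basin then $\alpha(a)$ is close to $\bz$ and in the $\Psi$-attracting basin, and it suffices to produce $G\colon[0,\infty)\to\RR^{m}$, definable in $\RR_{an,\exp}$, with $G(0)=\alpha(a)$, with $G(x+1)=\Psi^{\circ B}(G(x))$, and with image inside the neighborhood on which the conjugacy is valid; then $F:=\alpha^{-1}\circ G$ will do. The degenerate case is handled first: if some coordinate of $\alpha(a)$ is zero then $\Psi(\alpha(a))=\bz$, since every entry of $M$ is positive, so one may take $G$ to be the segment from $\alpha(a)$ to $\bz$ on $[0,1]$ and $G\equiv\bz$ on $[1,\infty)$ (which lies in the required neighborhood once $a$ is close enough). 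So assume all coordinates of $\alpha(a)$ are nonzero and $|\alpha(a)_{i}|<1$ for every $i$.

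For the moduli, the identity $\log|\Psi^{\circ B}(y)_{i}|=\sum_{j}(M^{B})_{ij}\log|y_{j}|$ says that in logarithmic coordinates the moduli evolve linearly under $M^{B}$, so with $u_{0}:=(\log|\alpha(a)_{1}|,\dots,\log|\alpha(a)_{m}|)$ I would set $|G(x)_{i}|:=\exp\bigl((E_{m}(x,M^{B})u_{0})_{i}\bigr)$; then $E_{m}(x+1,M^{B})=M^{B}E_{m}(x,M^{B})$ gives $|G(x+1)_{i}|=\prod_{j}|G(x)_{j}|^{(M^{B})_{ij}}$, while $E_{m}(n,M^{B})=M^{Bn}$ gives $|G(n)_{i}|=|\Psi^{\circ Bn}(\alpha(a))_{i}|$ for $n\in\NN$, and $x\mapsto|G(x)|$ is definable in $\RR_{\exp}$. (The uniqueness argument in Lemma~\ref{expgln} shows, incidentally, that this exponential formula is the only definable solution, so there is no freedom here.) For the signs, the sign of the $i$-th coordinate evolves under the affine self-map $T$ of $(\ZZ/2\ZZ)^{m}$ determined by $M\bmod 2$ and $\lambda$; any forward orbit of $T$ is eventually periodic with preperiod and period at most $2^{m}$, so taking $B:=B(m):=\operatorname{lcm}(1,2,\dots,2^{m})$ makes $B$ exceed every such preperiod and divisible by every such period. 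Consequently $\eta:=T^{B}(\sgn(\alpha(a)))$ equals $T^{Bn}(\sgn(\alpha(a)))$ for every $n\geq 1$ and is fixed by $T^{B}$; setting $\sigma(x):=\sgn(\alpha(a))$ for $x\in[0,1)$, $\sigma(x):=\eta$ for $x\geq 1$, and $G(x)_{i}:=\sigma_{i}(x)|G(x)_{i}|$, a direct check of the two cases $x<1$ and $x\geq 1$ (using $\sigma(x+1)=T^{B}(\sigma(x))$ together with the modulus identity) yields $G(x+1)=\Psi^{\circ B}(G(x))$ for all $x\geq 0$; as $\sigma$ is semialgebraic, $G$ is definable in $\RR_{\exp}$.

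It remains to ensure that $G([0,\infty))$ stays inside the neighborhood on which $\alpha^{-1}$ and the conjugacy are defined. Since the strictly positive $M^{B}$ has, by Perron--Frobenius, a simple dominant eigenvalue $\rho^{B}>1$ with positive left- and right-eigenvectors and $u_{0}$ has all coordinates negative, the Perron component of $M^{Bn}u_{0}$ is negative, whence $(M^{Bn}u_{0})_{i}\to-\infty$; by o-minimality each coordinate of $x\mapsto E_{m}(x,M^{B})u_{0}$ is then eventually monotone, so $G(x)\to\bz$. Controlling $G$ on the remaining bounded initial interval requires genuine estimates rather than formal juggling — here one exploits that $E_{m}(x,M^{B})$ multiplies the positive Perron eigenvector by $\rho^{Bx}$, together with the tameness of $x\mapsto E_{m}(x,M^{B})$ on compacta, and one shrinks the region allowed for $a$ accordingly — and I expect precisely this confinement step, namely keeping the forced exponential interpolant inside the domain of $\alpha^{-1}$ at non-integer times, to be the main obstacle. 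Granting it, $F:=\alpha^{-1}\circ G$ satisfies $F(0)=a$ and $F(x+1)=\Phi^{\circ B}(F(x))$ and is definable in $\RR_{an,\exp}$, which is the assertion.
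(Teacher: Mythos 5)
Your proposal is correct and takes essentially the same route as the paper's proof: conjugate by the $\RR_{an}$-definable germ $\alpha$, normalize $\lambda$ to signs via Lemma~\ref{pm1}, extract a uniform $B(m)$ from the sign dynamics on $\{\pm 1\}^m$, interpolate the moduli by composing $E_m$ of Lemma~\ref{expgln} with $\exp$ and the constants $\ln|b_j|$, carry the signs separately, and treat vanishing coordinates as a degenerate case before pulling back through $\alpha^{-1}$. The confinement issue you flag (keeping the interpolant inside the domain of $\alpha^{-1}$ and of the conjugacy at non-integer times) is likewise left unaddressed in the paper's own proof, and your handling of preperiodic sign vectors is if anything more careful than the paper's.
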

\begin{proof}
The number $B(m)$ is simply the least common multiple of the lengths of the periodic cycles of the maps $x \mapsto x^M$ on $(\pm 1)^n$ as $M$ ranges through $\GL_n(\QQ) \cap \operatorname{Mat}_{n \times n}(\NN)$.

The function $\alpha$ which conjugates $\Phi$ to a monomial map $\lambda \cdot x^M$ is definable in $\RR_{an}$, at least when restricted to some neighborhood of the origin, by the very definition of $\RR_{an}$.  Using Lemma~\ref{pm1} we may assume that each component of $\lambda$ is $\pm 1$ and from our choice of $B$ we then have that $\lambda^{M^B} = \lambda$.   Hence, for $a$ close enough to the origin we have $$\Phi^{\circ B n} (a) = \alpha^{-1} ( \lambda \cdot ( \alpha(a))^{M^{Bn}})$$  We define $$F(x) := \alpha^{-1}(\lambda \cdot (\alpha(a))^{E_m(x,M^B)})$$ where $E_m$ is the function of Lemma~\ref{expgln}.  We need to say a little about how to compute $(\alpha(a))^{(M^B)^x}$.   Write $(\alpha(a)) = (b_1,\ldots,b_n)$, then by our choice of $B$ we know that the sign of $b_i$ is the same as that of the $i^\text{th}$ component of $(\alpha(a))^{M^B}$.  Thus, we may compute $(\alpha(a))^{{M^B}^x}$ as $$(\frac{b_1}{|b_1|} \exp(\sum_{j=1}^n (E_m(x,M^B))_{1,j} \ln |b_j|), \ldots, \frac{b_n}{|b_n|} \exp(\sum_{j=1}^n (E_m(x,M^B))_{n,j} \ln |b_j|))$$ as long as $b_i \neq 0$ for all $i \leq n$.  If some $b_i = 0$, then each term in which $b_i$ appears will be zero and the remaining terms may expressed in the requisite form.
\end{proof}

\begin{cor}
\label{productexp}
If $\Phi$ is expressible as a Cartesian product of a strongly projectively linearizable function and a strongly monomializable function and $a$ is close enough to the origin, then possibly after replacing $\Phi$ by a compositional power there is a function $G:[0,\infty) \to \RR^n$ definable in $\RR_{an,\exp}$ satisfying $G(0) = a$ and $G(x+1) = \Phi(G(x))$.
\end{cor}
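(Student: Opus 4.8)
The plan is to interpolate the two Cartesian factors of $\Phi$ separately and then reassemble them. Write $\Phi = \Phi_1 \times \Phi_2$ as a self-map near $\bz$ of $\RR^{n_1} \times \RR^{n_2} = \RR^n$, with $\Phi_1$ strongly projectively linearizable and $\Phi_2$ strongly monomializable, and split $a = (a_1, a_2)$ accordingly. Proposition~\ref{monoexp} applied to $\Phi_2$ furnishes a number $B = B(n_2)$ together with an $\RR_{an,\exp}$-definable function $F_2 \colon [0,\infty) \to \RR^{n_2}$ satisfying $F_2(0) = a_2$ and $F_2(x+1) = \Phi_2^{\circ B}(F_2(x))$, as soon as $a_2$ is close enough to $\bz$; the attracting-basin requirement in that proposition is harmless, since (after the normalization of Lemma~\ref{pm1} to a coefficient vector with entries $\pm 1$) the origin is an attracting fixed point of any monomial map with nonnegative-integer exponent matrix, so every point near enough to $\bz$ lies in its basin. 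This $B$ will be the compositional power mentioned in the statement: replace $\Phi$ by $\Phi^{\circ B}$ and interpolate that.

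For the projective factor I would mimic the proof of Proposition~\ref{monoexp}, applying the exponentiation Lemma~\ref{expgln} in the linear group acting on homogeneous coordinates. Fix the $\RR_{an}$-definable analytic $\alpha$, invertible near $\bz$ with $\alpha(\bz) = \bz$, and a representative matrix $g \in \GL_{n_1+1}^+(\RR)$ of the projective linearization --- which may be taken with positive real eigenvalues precisely because $\Phi_1$ is \emph{strongly} projectively linearizable --- so that near $\bz$ the map $\alpha \circ \Phi_1 \circ \alpha^{-1}$ is the fractional linear transformation of affine $n_1$-space induced by the action of $g$ on $\RR^{n_1+1}$; then $\alpha \circ \Phi_1^{\circ B} \circ \alpha^{-1}$ is induced in the same way by $g^B \in \GL_{n_1+1}^+(\RR)$. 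Writing $E := E_{n_1+1}$ for the function of Lemma~\ref{expgln}, $\pi(y_1, \ldots, y_{n_1+1}) := (y_1/y_{n_1+1}, \ldots, y_{n_1}/y_{n_1+1})$ for the standard affine chart, and $\tilde v := (\alpha(a_1), 1) \in \RR^{n_1+1}$, set
$$F_1(x) := \alpha^{-1}\bigl( \pi( E(x, g^B) \cdot \tilde v ) \bigr)$$
for $a_1$ close enough to $\bz$. This $F_1$ is definable in $\RR_{an,\exp}$, being assembled from the $\RR_{an}$-definable $\alpha$ and $\alpha^{-1}$, the $\RR_{\exp}$-definable $E$, and rational operations. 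Since $E(0, g^B) = (g^B)^{-1} E(1, g^B) = I_{n_1+1}$ by Lemma~\ref{expgln}, we get $F_1(0) = \alpha^{-1}(\alpha(a_1)) = a_1$; the functional equation $F_1(x+1) = \Phi_1^{\circ B}(F_1(x))$ is then formal, since by Lemma~\ref{expgln} $E(x+1, g^B) = g^B \cdot E(x, g^B)$ and the $\GL_{n_1+1}$-action on $\RR^{n_1+1}$ descends through $\pi$ to the fractional linear transformation it induces, which near $\bz$ is exactly $\alpha \circ \Phi_1^{\circ B} \circ \alpha^{-1}$.

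With both factors in hand, $G := (F_1, F_2) \colon [0,\infty) \to \RR^n$ is definable in $\RR_{an,\exp}$, satisfies $G(0) = (a_1, a_2) = a$, and obeys $G(x+1) = (\Phi_1^{\circ B}(F_1(x)), \Phi_2^{\circ B}(F_2(x))) = \Phi^{\circ B}(G(x))$, which is the assertion once $\Phi$ is replaced by the compositional power $\Phi^{\circ B}$. The one step I expect to require genuine care --- the technical heart of the argument --- is verifying that $F_1$ really is defined on all of $[0,\infty)$: one must keep the curve $x \mapsto E(x, g^B)\,\tilde v$ off the hyperplane $\{ y_{n_1+1} = 0 \}$ and keep its image under $\pi$ inside the neighbourhood of $\bz$ on which $\alpha^{-1}$ and the linearization identity are valid. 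The way to handle this is to note that $E(\cdot, g^B)$ is the genuine one-parameter subgroup $x \mapsto \exp(x \log(g^B))$ through $g^B$ --- the principal logarithm exists because $g^B \in \GL_{n_1+1}^+(\RR)$ --- so it descends to the flow $\Psi^x$ of a one-parameter subgroup of $\operatorname{PGL}_{n_1+1}(\RR)$ acting on $\mathbb{P}^{n_1}$, with $\Psi^1 = \alpha \circ \Phi_1^{\circ B} \circ \alpha^{-1}$ near $\bz$; because $\Phi_1$ is a self-map near $\bz$, the time-one map $\Psi^1$ preserves a neighbourhood of $\bz$ inside the affine chart, and a routine compactness argument then yields a smaller neighbourhood that $\Psi^x$ keeps inside the chart for every $x \geq 0$. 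Confining $a_1$, hence $a$, to that neighbourhood --- this is what ``close enough to the origin'' must mean here --- makes $F_1$ and therefore $G$ well defined on $[0,\infty)$; everything else in the argument is purely formal, flowing from Lemma~\ref{expgln} and the definitions.
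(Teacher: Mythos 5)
Your proposal is correct and follows essentially the same route as the paper: split $\Phi$ into its two Cartesian factors, apply Proposition~\ref{monoexp} to the strongly monomializable factor (forcing the passage to a compositional power $\Phi^{\circ B}$), and interpolate the strongly projectively linearizable factor via Lemma~\ref{expgln} after conjugating by $\alpha$. The paper's own proof is terser---it simply conjugates so that this factor is projectively linear and writes $G(x) = (F(x), E_n(x+1,\Theta)c)$---so your homogeneous-coordinate bookkeeping and the flow/compactness argument keeping the orbit inside the affine chart and the domain of $\alpha^{-1}$ are details the paper leaves implicit rather than a genuinely different method.
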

\begin{proof}
Write $\Phi = \Psi \times \Theta$ where $\Psi$ is strongly monomializable and $\Theta$ is strongly projectively linearizable.  Write $a = (b,c)$ relative to the decomposition of $\Phi$. Conjugating by an analytic function, we may assume that $\Theta$ is actually projectively linear.  By Proposition~\ref{monoexp} there is a definable function $F$ so that $F(0) = b$ and $F(x+1) = \Psi(F(x))$.  Using $F$ and Lemma~\ref{expgln} we may write $G(x) = (F(x),E_n(x+1,\Theta) c)$ where $n$ is the dimension of the linear part.
\end{proof}

Let us observe that Corollary~\ref{productexp} applies to functions expressed as products of univariate functions near their fixed points.

\begin{fact}
If $\Phi$ is a real analytic function in one variable near the origin which fixes the origin, then $\Phi^{\circ 2}$ is strongly projectively linearizable or strongly monomializable.
\end{fact}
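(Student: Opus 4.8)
The plan is a trichotomy according to the value of $\lambda := \Phi'(0)$, quoting the classical local normal form theorems for analytic germs in one variable and exploiting the fact that passing from $\Phi$ to $\Phi^{\circ 2}$ kills every sign ambiguity, so that the ``strongly'' clauses follow automatically from the plain ``projectively linearizable'' and ``monomializable'' ones. Whenever a normal form theorem is naturally stated over $\CC$, I would extract a \emph{real} conjugacy by complexifying $\Phi$, applying the theorem, and invoking uniqueness to choose the conjugating coordinate so that it commutes with complex conjugation --- this is precisely the point at which it matters that $\lambda$, and more generally the data of $\Phi$, is real.

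\emph{The superattracting case $\lambda = 0$.} Then $d\Phi_\bz \equiv \bz$, and, discarding the degenerate germ $\Phi \equiv 0$, after a preliminary real linear rescaling we may write $\Phi(x) = \varepsilon x^k + O(x^{k+1})$ with $k \geq 2$ and $\varepsilon \in \{+1,-1\}$. B\"{o}ttcher's theorem then produces a real analytic $\alpha$ fixing and invertible near the origin with $\alpha \circ \Phi \circ \alpha^{-1}(x) = \varepsilon x^k$, whence $\alpha \circ \Phi^{\circ 2} \circ \alpha^{-1}(x) = \varepsilon^{k+1} x^{k^2}$. This is of monomial shape $\lambda' \cdot x^{M}$ with $\lambda' = \varepsilon^{k+1} \in \{+1,-1\}$ (so the vector is already $\pm 1$, as in Lemma~\ref{pm1}) and $M = (k^2)$; since $k^2 \geq 4$ is a positive integer which is not a root of unity and is real and positive, $\Phi^{\circ 2}$ is strongly monomializable.

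\emph{The hyperbolic case $\lambda \neq 0$, $|\lambda| \neq 1$.} Koenigs' linearization theorem gives a real analytic $\alpha$ with $\alpha \circ \Phi \circ \alpha^{-1}(x) = \lambda x$, a fractional linear transformation, so $\Phi$ is projectively linearizable and $\alpha \circ \Phi^{\circ 2} \circ \alpha^{-1}(x) = \lambda^2 x$. The matrix $\operatorname{diag}(\lambda^2,1)$ represents this transformation and has only the positive real eigenvalues $\lambda^2$ and $1$, so $\Phi^{\circ 2}$ is strongly projectively linearizable.

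\emph{The parabolic case $\lambda = \pm 1$.} Now $(\Phi^{\circ 2})'(0) = 1$. If $\Phi^{\circ 2}$ is the identity germ (which happens exactly when $\Phi$ is an analytic involution) we are done, the identity being fractional linear with identity matrix. Otherwise $\Phi^{\circ 2}(x) = x + c x^k + O(x^{k+1})$ with $k \geq 2$ and $c \neq 0$ is tangent to the identity and must be exhibited as strongly projectively linearizable. This is the step I expect to be the real obstacle: unlike the two cases above, a tangent-to-the-identity germ is in general \emph{not} analytically conjugate to the model $x \mapsto x/(1+\nu x)$ --- the Leau--Fatou flower theorem and the \'{E}calle--Voronin moduli intervene --- so no linearization theorem can simply be quoted. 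The route I would take is to pass to the Fatou coordinate $\alpha$ on the attracting petal, which contains the tail of every orbit lying in the attracting basin and which satisfies an expansion $\alpha(x) \sim c' x^{1-k} + d'\ln x + (\text{analytic in }x)$, and thereby to embed $\Phi^{\circ 2}$, on that petal, in the analytic one-parameter flow $t \mapsto \alpha^{-1}(\alpha(\,\cdot\,) + t)$; one then checks that this flow is definable in $\RR_{an,\exp}$ and that it plays, for the purposes of the applications in this section, exactly the role of a projective linearization. Reconciling this construction with the letter of the definition of strong projective linearizability --- and handling the petal-and-basin geometry, in particular the subtlety at $\lambda = -1$, where $\Phi$ interchanges the attracting and repelling petals whereas $\Phi^{\circ 2}$ preserves each --- is where the actual work concentrates.
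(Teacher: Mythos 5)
Your superattracting and hyperbolic cases are exactly the paper's argument: B\"{o}ttcher over $\RR$ gives a conjugacy to $\pm x^k$ (the sign ambiguity being exactly the $(k-1)^{\text{th}}$-root issue the paper flags and absorbs via Lemma~\ref{pm1}), and K{\oe}nigs plus squaring makes the multiplier positive; there is nothing to add there.

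The gap is the parabolic case $\Phi'(0) = \pm 1$, and you say so yourself: you do not produce an $\alpha$, analytic and invertible on a neighborhood of $0$, conjugating $\Phi^{\circ 2}$ to a fractional linear map, and your fallback --- the Fatou coordinate on the attracting petal, embedding $\Phi^{\circ 2}$ in a flow, and an asserted definability in $\RR_{an,\exp}$ --- is an unproved sketch of a different statement: the Fatou coordinate is only petal-defined, is not analytic at $0$, and its definability in $\RR_{an,\exp}$ is itself a substantial claim you give no argument for. The paper disposes of this case in one sentence: by Leau's theorem (Theorem 10.9 of~\cite{Mi}), $\Phi^{\circ 2}$, having multiplier $+1$ at $0$ (or being the identity, in the involution case), is conjugate to $x \mapsto x+1$ at infinity, hence at the origin to the projectively linear map $x \mapsto x/(1+x)$, whose matrix is unipotent and so has only the eigenvalue $1$; this is what makes the square strongly projectively linearizable. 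So, measured against the Fact as stated, your proposal proves only the $\lambda = 0$ and $0 < |\lambda| \neq 1$ cases. It is only fair to add that your objection is aimed at precisely the thinnest step of the paper's proof: the Leau--Fatou conjugacy lives on a petal, not on a full neighborhood of the fixed point, and a generic tangent-to-the-identity germ (e.g.\ $x + x^2$, whose iterative residue differs from the common value for M\"{o}bius germs tangent to the identity) is not even formally conjugate to a fractional linear transformation, so the paper's citation of Leau cannot be read as a local linearization theorem in the literal sense of its own definition. But flagging that difficulty is not the same as resolving it; as submitted, the parabolic case remains unproven in your write-up.
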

\begin{proof}
We must break into the following cases:
\begin{itemize}
\item $\Phi \equiv 0$,
\item $\Phi'(0) = 0$ but $\Phi \not \equiv 0$,
\item $|\Phi'(0)| \neq 1$, and
\item $|\Phi'(0)| = 1$.
\end{itemize}

In the first case, $\Phi$ is obviously linear already.  The second case is an instance of B\"{o}ttcher's theorem~\cite{Boe} (see Chapter 9 of~\cite{Mi}).  The proof presented in~\cite{Mi} applies to complex analytic functions and the conclusion is stronger, namely that $\Phi$ is analytically conjugate to $x^N$ for where $N$ is its order of vanishing.  However, to conjugate $\Phi$ to $x^N$ it might be necessary to take an $(N-1)^\text{th}$ root which might not be possible over $\RR$.  The third case is a theorem of K{\oe}nigs~\cite{Koe} (see Chapter 8 of~\cite{Mi}).  The final case, of the so-called indifferent fixed points, is in general the most complicated case to study, but as we are dealing with real analytic functions of a single variable the only possibilities for $\Phi'(0)$ are $\pm 1$ and in this case $0$ is a parabolic fixed point for $\Phi^{\circ 2}$ and we may apply Leau's linearization theorem~\cite{Leau}  (Theorem 10.9 in~\cite{Mi}) $\Phi^{\circ 2}$ is conjugate to $x + 1$ at $\infty$ which when conjugated back to the origin is a projectively linear map.

In this one dimensional case, the monomializable functions are automatically strongly monomializable while the linearizable functions might not be strongly linearizable, but their compositional squares always are.
\end{proof}

\begin{thm}
If $\Phi$ is a real analytic function for which some positive compositional power is expressible as a product of a strongly projectively linearizable function and a strongly monomializable function and $a$ is close enough to the origin, then for any closed real analytic variety $X$ the set $\{ n \in \NN : \Phi^{\circ n} (a) \in X \}$ is a finite union of points and arithmetic progressions.  In particular, this result holds for $\Phi$ expressible as a product of univariate functions.
\end{thm}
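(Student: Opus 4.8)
The plan is to reduce the theorem to Corollary~\ref{productexp} and then invoke o-minimality of $\RR_{an,\exp}$ directly. First I would fix a positive integer $N$ so that $\Phi^{\circ N}$ is a Cartesian product $\Psi \times \Theta$ of a strongly projectively linearizable function $\Psi$ and a strongly monomializable function $\Theta$; such an $N$ exists by hypothesis. Splitting the index set according to residues modulo $N$, write $\{ n \in \NN : \Phi^{\circ n}(a) \in X \} = \bigcup_{r=0}^{N-1} \{ r + N k : k \in \NN,\ (\Phi^{\circ N})^{\circ k}(\Phi^{\circ r}(a)) \in X \}$. Since a finite union of finite unions of points and arithmetic progressions is again of that form, it suffices to treat each residue class separately, i.e.\ to show that for each fixed point $a' = \Phi^{\circ r}(a)$ (which is still close enough to the origin, after shrinking the neighborhood) the set $\{ k \in \NN : (\Phi^{\circ N})^{\circ k}(a') \in X \}$ is a finite union of points and arithmetic progressions.

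Next I would apply Corollary~\ref{productexp} to $\Phi^{\circ N}$: possibly after passing to a further compositional power $(\Phi^{\circ N})^{\circ M}$ — which we absorb by the same residue-splitting trick one more time — we obtain a function $G:[0,\infty) \to \RR^m$ definable in $\RR_{an,\exp}$ with $G(0) = a'$ and $G(x+1) = (\Phi^{\circ N})^{\circ M}(G(x))$, so that $G(k) = ((\Phi^{\circ N})^{\circ M})^{\circ k}(a')$ for all $k \in \NN$. Now let $X$ be cut out near the relevant point by the vanishing of finitely many real analytic functions $h_1, \ldots, h_s$; since everything takes place in a fixed small box around the origin (the attracting basin in which $G$ takes values, shrunk if necessary so that the $h_i$ are given by restricted analytic functions there), the set $S := \{ x \in [0,\infty) : G(x) \in X \} = \{ x \in [0,\infty) : h_1(G(x)) = \cdots = h_s(G(x)) = 0 \}$ is definable in $\RR_{an,\exp}$. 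By o-minimality, $S$ is a finite union of points and intervals; being a subset of $[0,\infty)$, in particular $S \cap \NN$ is either finite (if $S$ contains no unbounded interval) or contains a ray $[B,\infty)$.

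Finally I would translate this back. If $S \cap \NN$ is finite we are done for that residue class. Otherwise $[B,\infty) \subseteq S$ for some $B$, whence all sufficiently large natural numbers $k$ lie in the set, and $\{ k \in \NN : ((\Phi^{\circ N})^{\circ M})^{\circ k}(a') \in X \}$ differs from $\NN$ in only finitely many elements; that is again a finite union of points and one arithmetic progression (of modulus $1$). Unwinding the at most two levels of residue-class splitting turns each such conclusion into a finite union of points and arithmetic progressions in the original variable $n$, and taking the union over all residues preserves this form, proving the theorem. The last sentence, that the result applies to $\Phi$ a Cartesian product of univariate functions near their common fixed point, follows from the preceding Fact applied coordinatewise together with the observation that $\Phi^{\circ 2}$ is then a product of functions each of which is strongly projectively linearizable or strongly monomializable, which after grouping the factors and passing to a further square is of the form required by Corollary~\ref{productexp}.

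I expect the only real subtlety to be bookkeeping: making sure that the finitely many compositional powers introduced (the $N$ from the hypothesis, the $M$ from Corollary~\ref{productexp}, and the implicit power inside the Fact for the univariate application) are handled uniformly by repeated residue-class decomposition, and that at each stage the shifted starting point $\Phi^{\circ r}(a)$ remains inside the common neighborhood on which the linearizing conjugacies and the defining equations of $X$ are all valid. The genuinely mathematical content — that an o-minimal subset of $[0,\infty)$ meets $\NN$ in a set that is eventually all of $\NN$ or finite — is immediate from the definition of o-minimality, exactly as in the $p$-adic Skolem--Mahler--Lech argument but with "finite union of cosets of $p^m\ZZ_p$" replaced by "finite union of points and intervals."
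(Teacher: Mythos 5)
Your proposal is correct and follows essentially the same route as the paper: split the orbit into residue classes modulo the relevant compositional power, interpolate each subsequence by an $\RR_{an,\exp}$-definable function via Corollary~\ref{productexp}, and invoke o-minimality to see that each set $\{x : G(x) \in X\}$ is a finite union of points and intervals, hence meets $\NN$ in a finite or cofinite set. Your extra bookkeeping about the additional power $M$ hidden in Corollary~\ref{productexp} and about restricting the defining functions of $X$ to a box is a slightly more careful rendering of what the paper does implicitly, not a different argument.
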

\begin{proof}
Let $N \in \ZZ_+$ so that $\Phi^{\circ N}$ is a product of a strongly projectively linearizable function by a strongly monomializable function.  By Corollary~\ref{productexp} we find $\RR_{an,\exp}$ definable functions $G_0, \ldots, G_{N-1}$ so that $G_j(m) = \Phi^{\circ(Nm + j)}(a)$ for $m \in \NN$.  Each of the sets $\{x \in \RR : G_j(x) \in X \}$ is definable in $\RR_{an,\exp}$ and as such is a finite union of points and intervals.  Thus, the set of $m \in \NN$ with $\Phi^{\circ m}(a) \in X$ is a finite union of points and arithmetic progressions with modulus $N$.
\end{proof}

\section{Concluding speculations}
\label{cr}

We end this note with two observations about possible extensions of these methods.

First, it might seem that by interpreting $\CC$ as $\RR^2$ and complex analytic functions, via their real and imaginary parts, as real analytic functions in more variables, we could in a similar manner parametrize the orbits of complex analytic dynamical systems by $\RR_{an,\exp}$ definable functions near their fixed points.  However, we needed to restrict to \emph{strongly} projectively linearizable or \emph{strongly} monomializable functions.  Even if $\Phi$ is linearizable, if its eigenvalues are not real, then the orbits do not admit definable parametrizations.  However, they do admit reasonably concrete parametrizations and it might be reasonable to hope that a direct analytic argument not dependent on the theory of o-minimality could be employed.  On the other hand, it will happen in some cases that the orbit will not be globally parametrized, but it will be contained in the image of a definable function.  In this case, the o-minimal argument applies.   For example, this can happen near an irrationally indifferent point at which the self-map is linearizable and all of the eigenvalues of the corresponding linear map share a positive real period.

Miller has studied~\cite{Miller} the model theory of an expansion of an o-minimal structure on the real field by a trajectory for a definable vector field on the plane and has proven a kind of dichotomy theorem between tame behavior (d-minimality, every one variable definable set is a finite union of discrete sets and intervals) and wild behavior in which the integers are definable.  Some of the curves enveloping orbits in the complex analytic situation fall into his tame framework.  In related work, Miller and Tyne~\cite{MT} have shown that the structure obtained by naming an orbit of a definable unary function under the hypothesis that the orbit escapes to infinity and that the iterates of the function in question are cofinal in the set of all definable functions is also d-minimal.  These tameness theorems should generalize to functions of several variables and they should yield arithmetic information in the cases not amenable to the o-minimal analysis.

Secondly, we have used the o-minimality of $\RR_{an,\exp}$ rather crudely invoking merely its definition so as to say something about definable sets in one variable.   We might wish to study finitely many self-maps $\Phi_1,\ldots,\Phi_n$, points $a_1,\ldots,a_n$ and analytic sets $X$ (in the appropriate number of variables) and then look at the set
$$S := \{(m_1,\ldots,m_n) \in \NN^n : (\Phi_1^{\circ m_1}(a_1),\ldots,\Phi_n^{\circ m_n}(a_n)) \in X \}$$
If the orbits of $a_i$ under $\Phi_i$ admit $\RR_{an,\exp}$ definable parametrizations $F_i:[0,\infty) \to \RR^{m_i}$, then $S$ may be seen as the integer points on the $\RR_{an,\exp}$ definable set $$\widetilde{S} :=  \{ (x_1,\ldots,x_n) \in [0,\infty)^n : (F_1(x_1),\ldots,F_n(x_n)) \in X \}$$
In all generality, the set $S$ might contain several infinite families, but the theorems of Pila and Wilkie on counting rational points in sets definable in o-minimal structures limit the number integer points in $\widetilde{S}$ of small height.  More precisely, one must first compute the \emph{algebraic part}, $\widetilde{S}^{alg}$, of $\widetilde{S}$ by which we mean the union of all positive dimensional connected semialgebraic subsets of $\widetilde{S}$.  In practice, one expects that $\widetilde{S}^{alg}$ will be defined by some linear conditions, but the actual determination is a subtle problem.  Then one knows that for each $\epsilon > 0$ there is a constant $C(\epsilon)$ so that for $B \geq 1$
$$\# \{ (m_1,\ldots,m_n) \in S \smallsetminus \widetilde{S}^{alg} : |m_i| \leq B \text{ all } i \leq n \} \leq C(\epsilon) B^\epsilon$$

These bounds are much weaker than what one expects to be true, but bounds of any kind are notoriously difficult to obtain for the higher rank dynamical Mordell-Lang problem.

\bibliographystyle{plain}
\bibliography{eusk}

\end{document}